\newcommand{\autor}{Robert Nicholls}					
\newcommand{\HH}{\mathbb{H} }				
\newcommand{\RR}{\mathbb{R} }				
\newcommand{\NN}{\mathbb{N} }				
\newcommand{\betrag}[1]{\lvert #1 \rvert}	
\newcommand{\diff}{\mathop{}\!\mathrm{d}}	
\newcommand{\ie}{i.~e.\ }					
\newcommand{\spacedot}{\ \cdot \ }			
\newcommand{\crit}{\mathrm{crit}}
\theoremstyle{break}							
\newtheorem{theorem}{Theorem}			
\newtheorem{lemma}[theorem]{Lemma}				
\newtheorem{corollary}[theorem]{Corollary}		
\theoremstyle{nonumberplain}					
\newtheorem{proof}{Proof}						
\begin{document} 		

\parindent1em				

\begin{center}
\begin{doublespace}
\textbf{\huge{Syzygies for periodic orbits in the restricted three-body problem}}
\end{doublespace}
\end{center}
\begin{center}
\autor
\end{center}

\begin{abstract}
In this paper we show the existence of syzygies for all periodic orbits inside the bounded Hill's region of the planer circular restricted three-body problem with energy below the second critical value.
The proof will follow some ideas of Birkhoff \cite{birkhoff} to compute the roots of partial derivatives of the effective potential.
Birkhoff's methods are extended to higher energies and a new base case is created and shown to fulfil the requirements.
An other step from Birkhoff is scrutinized to continue the statement to all mass ratios.
The final step is achieved by integrating over periodic orbits.
Applying the same methods to Hill's lunar problem delivers similar results in that setting as well.
\end{abstract}

\section{Introduction} \label{section introduction}
Going back to Poincaré, the planar circular restricted three-body problem (PCR3BP) is one of the oldest and most studied simplifications of an $N$-body problem.
This paper presents a proof that under certain conditions all periodic orbits have syzygies.
In view of Birkhoff's conjecture that the retrograde bounds a disc-like global surface of section, one would like to show that the retrograde is the systole.
The result from this work proves the non-existence of periodic orbits without syzygies and thus reduces the complexity of the search for further periodic orbits by one dimension.

An other possible application, which this paper could be a starting point for, is the development of symbolic dynamics for the PCR3BP.
Symbolic dynamics in three symbols has been successfully applied to the Euler problem of two centres in \cite{dullin_montgomery} and the existence of an infinite sequence of syzygies has been proven for every---exept Lagrange's---solution in the general three-body problem with zero angular momentum in \cite{montgomery_infinitely} and \cite{montgomery}.
As soon as one adds angular momentum there are periodic non-collisional Langrangian solutions without syzygies.
However, if the angular momentum is small enough then sequences of syzygies derived by free homotopy classes on the reduced configurations space are realized by periodic solutions \cite{Moeckel_Montgomery} and symbolic dynamics has at least numerically been constructed for special types of orbits in a certain PCR3BP situation by \cite{Wilczak_Zgliczynski_1} and \cite{Wilczak_Zgliczynski_2}.
For the general case of the PCR3BP the existence of syzygies for periodic orbits is in general not true.
In \cite{birkhoff} Birkhoff proved a very helpful statement about the roots of the partial derivatives of the effective potential, which we will use in this work.
We will also extend his main argument to higher energies and finally prove the existence of syzygies for periodic orbits within the bounded Hill's region below the second critical value.
In the preparation for the proof we will mainly use notations and arguments from \cite{holo}, which were slightly adapted and summarized in \cite{master}, to also cover the proof of the statement below the first critical level.

The structure of this work will be the following:
First we will go through a short introduction to the PCR3BP in section \ref{section 3-body problem} and then in section \ref{section preparations} recall some general definitions and facts about the PCR3BP as preparation for the proof.
Since we will be using some elementary Morse theory to formalize certain steps from \cite{birkhoff}, we will also state the required definitions and lemmata.
From there on, we will go through two base cases in section \ref{section base cases}:
The first basically comes directly from \cite{birkhoff} and only achieves the weaker statement below the first critical value, while the second case is new and its continuation covers all energies below the second critical value (in  particular it also covers the first case).
The continuation of these bases cases in section \ref{section continuation} will cover all mass ratios and all stated energies to prove the main result

\begin{theorem}\label{main theorem}
Every periodic orbit of the PCR3BP inside the bounded Hill's region~$\mathfrak{K}_c^b$ for an energy $c<H(L_2),H(L_3)$ below the second critical value has at least two distinct syzygies during each period.

\end{theorem}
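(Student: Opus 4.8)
The plan is to reduce the theorem to a statement about the sign of a single partial derivative of the effective potential and then to close the argument by integrating the equations of motion over one period. Place the two primaries on the collinear axis $\{y=0\}$, so that a syzygy is precisely a time at which the orbit meets this axis, and write $U$ for the effective potential and $r_1,r_2$ for the distances to the primaries. Since $U$ is even in $y$, its partial derivative factors as $U_y=y\,g$ with
\[
 g(x,y)=\frac{1-\mu}{r_1^3}+\frac{\mu}{r_2^3}-1 .
\]
The zero set $\{g=0\}$ is the off-axis root locus of $U_y$; it is the boundary of a bounded neighbourhood $\{g>0\}$ of the two primaries and it passes through the equilateral Lagrange points $L_4,L_5$, whose energy lies strictly above the second critical value.

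The heart of the proof is a Birkhoff-type key lemma: for every mass ratio $\mu$ and every energy $c$ below the second critical value the root locus $\{g=0\}$ is disjoint from the bounded Hill's region, so that $\mathfrak{K}_c^b\subset\{g>0\}$ and $g>0$ throughout $\mathfrak{K}_c^b$. Equivalently, $U\ge\min\{H(L_2),H(L_3)\}$ along $\{g=0\}$, since then no point of the root locus lies in a sublevel set $\{U\le c\}$ with $c$ below the second critical value. I would prove this in two stages matching the structure announced above. In the base cases one determines the relative position of $\{g=0\}$ and the zero-velocity curves $\{U=c\}$: first in the low-energy regime below the first critical value, taken essentially from Birkhoff, and then, in a new base case at a distinguished mass ratio, all the way up to the second critical value, using elementary Morse theory to keep track of how the sublevel sets $\{U\le c\}$ and the critical values $H(L_i)$ are organised. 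The continuation step then propagates the disjointness to all mass ratios by a continuity argument: as $\mu$ varies the curves $\{g=0\}$ and $\{U=c\}$ deform continuously, and a failure would produce a first parameter at which they touch, so that $\{g=0\}$ is tangent to a zero-velocity curve; the extremal behaviour of $U$ along $\{g=0\}$ established in the base case excludes such a first contact below the second critical value.

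With the key lemma in hand the final step is short and is exactly the integration over periodic orbits. For a periodic orbit $t\mapsto(x(t),y(t))$ of period $T$, integrating the second equation of motion $\ddot y+2\dot x=-U_y$ over one period and using periodicity of $\dot y$ and of $x$ yields
\[
 \oint U_y\,\diff t=0,
\]
and since $U_y=y\,g$ this reads $\oint y\,g\,\diff t=0$. As the orbit lies in $\mathfrak{K}_c^b$, where $g>0$, the integrand $y\,g$ cannot keep a single sign: if $y\ge 0$ along the whole orbit, the vanishing of the integral of a nonnegative continuous function would force $y\,g\equiv 0$ and hence $y\equiv 0$, and a non-constant orbit cannot stay on the collinear axis because that axis is not invariant; the case $y\le 0$ is symmetric. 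Hence $y$ assumes both signs, and by continuity and periodicity the function $y(t)$ has at least two sign-changing zeros, that is, at least two distinct syzygies per period.

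I expect the main obstacle to be the key lemma, and within it the two hardest ingredients: pushing the disjointness of $\{g=0\}$ from the bounded Hill's region up to the second critical value in the new base case, where the region has already merged through the inner neck and grown towards the outer structure, and making the continuation in the mass ratio rigorous by simultaneously controlling the deformations of $\{g=0\}$ and of the zero-velocity curves and ruling out a first tangency. The integration argument, by contrast, is robust and essentially formal once the sign of $g$ on $\mathfrak{K}_c^b$ is known.
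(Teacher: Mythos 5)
Your overall architecture coincides with the paper's: factor the derivative as $V_{q_2}=q_2W_\mu$ (your $U_y=y\,g$), prove as key lemma that $W_\mu>0$ on the bounded Hill's region via a base case plus a continuation in the parameters, and finish by integrating the second equation of motion over a period. The integration endgame is correct and essentially identical to the paper's. The genuine gap lies in your proposed proof of the key lemma: the reformulation \emph{``equivalently, $U\ge\min\{H(L_2),H(L_3)\}$ along $\{g=0\}$''} is false, and with it the sufficient condition you plan to verify. The root locus $\{g=0\}$ crosses the $q_1$-axis on the far side of each primary, and at such a crossing one computes (this is exactly the paper's Case~1.2 computation, run in the opposite direction)
\begin{align*}
u^\prime(q_1)=V_{q_1}(q_1,0)
=\bigl(q_1-(1-\mu)\bigr)\,W_\mu(q_1,0)+(1-\mu)\left(\frac{1}{(q_1+\mu)^3}-1\right)
=(1-\mu)\left(\frac{1}{(q_1+\mu)^3}-1\right)<0
\end{align*}
for $q_1>1-\mu$. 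Since the restricted potential $u$ is strictly concave on $(1-\mu,\infty)$ with its maximum at $\ell_2$, the inequality $u^\prime(q_1)<0$ forces this crossing to lie strictly beyond $\ell_2$, whence $U(q_1,0)=u(q_1)<u(\ell_2)=H(L_2)$; symmetrically the crossing beyond $\ell_3$ has value below $H(L_3)$. So $\{g=0\}$ \emph{does} meet the sublevel sets $\{U\le c\}$ for $c$ just below the second critical value---it meets their unbounded component. The true statement, and the one the paper proves, is disjointness from the \emph{bounded} component only; your equivalence silently replaces the bounded Hill's region by the full sublevel set, and in that form the claim is unprovable because it is wrong.

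This defect propagates into your continuation step, which rests on ``the extremal behaviour of $U$ along $\{g=0\}$ established in the base case'' to exclude a first contact; since that extremal behaviour does not hold, the exclusion has no proof, and nothing in your outline replaces it. The paper closes precisely this hole with a different mechanism: a hypothetical first contact at parameter $t_0$ produces a vertical tangent of the oval of zero velocity at a point off the $q_1$-axis; one then shows that the height functions $f_t=\pi_1\circ\phi_t\colon S^1\to\RR$ on the ovals are Morse for every $t$ (the case analysis ruling out vertical points of inflexion, which uses $c<H(L_2),H(L_3)$ on the axis and a quotient-rule computation off it), so that by the implicit-function-theorem lemma the number of critical points of $f_t$ is constant along the path; it equals two at the base case, contradicting the at least four vertical tangents present at $t_0$. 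Some counting device of this kind---which crucially distinguishes the bounded oval from the rest of the level set---is indispensable, and it is the ingredient missing from your proposal.
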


\section{The planar circular restricted 3-body problem} \label{section 3-body problem}
The restricted problem of three bodies has been widely studied, so we will only do a very brief introduction here.
For further and more thorough inspection of the equations refer to works such as \cite{hagihara} or \cite{szebehely}.

The PCR3BP is the dynamics of a particle---in our case the moon---attracted by and moving in the same plane as the two primaries---here called the sun and the earth---which we assume to have circular motion around their common centre of mass as a Keplerian solution.
We normalize the masses to be~$0<\mu<1$ for the earth and~$1-\mu$ for the sun.
By introducing rotating coordinates the positions of the primaries become stationary at $(-\mu,0)=s$ and $(1-\mu,0)=e$ and the Hamiltonian becomes autonomous:
\begin{align}
H(q,p)
&= \frac{1}{2}( (p_1 + q_2)^2 + (p_2 - q_1)^2 ) \underbrace{ - \frac{\mu}{ \betrag{q - e} } - \frac{1 - \mu}{\betrag{q - s}} - \frac{1}{2}q^2 }_{ =: V(q)}  \label{r3bp H2form}
\end{align}
Here the effective potential energy, as the part independent of momentum, is denoted as~$V(q)$.
By Hamilton's equations of motion one gets the following second order ODEs
\begin{align}
\ddot{q_1}
&= \hphantom{-} 2\dot{q_2} - \frac{\partial V}{\partial q_1}
\label{r3bp equations 1}
\\
\ddot{q_2}
&= - 2 \dot{q_1} - \frac{\partial V}{\partial q_2}.
\label{r3bp equations 2}
\end{align}
In \cite{birkhoff} Birkhoff uses the alternative potential function
\begin{align}\label{Omega of r3bp}
\Omega(q) := \frac{1}{2} \left( (1-\mu) \cdot \betrag{q-s}^2 + \mu \cdot \betrag{q-e}^2 \right) + \frac{\mu}{\betrag{q-e}} + \frac{1-\mu}{\betrag{q-s}},
\end{align}
which only depends on the distances of the moon to sun and earth.
This potential function only differs from~$V$ by sign and a constant: $\Omega(q) = -V(q) + \mu(1-\mu)/2$, so adjusting the signs in the ODEs will render the same dynamics.
It has the advantage that it makes it easy to find critical points through a transformation we will use in section \ref{subsection critical points and hills regions}.

\section{Preparations for the proof} \label{section preparations}
In this section we will first recall some notations for periodic solutions in general, compute critical points and Hill's regions of the PCR3BP and then prove some elementary lemmata from Morse theory.

In a symplectic manifold~$(M,\omega)$ with an autonomous Hamiltonian~$H$ a periodic orbit is a solution~$x \in C^\infty(\RR,M)$ to the ODE
$$\frac{\diff}{\diff t} x(t) = X_H(x(t)) \qquad \forall t \in \RR,$$
such that there exists a period~$T>0$ for which~$x(T+t)=x(t)$ for all $t \in \RR$.
Here~$X_H$ is the Hamiltonian vectorfield defined implicitly by~$\diff H = \omega(\spacedot, X_H)$.
For every non-trivial---that is non-constant---periodic orbit~$x$ there exists a minimal period $T_x := \min \lbrace T>0 \mid x(T+t)=x(t) \quad \forall t \in \RR \rbrace$ and every period~$T=nT_x$ is just a natural multiple.
In our case---the symplectic manifold arises as the cotangent bundle of an open subset of~$\RR^2$ endowed with the standard symplectic form---a periodic orbit is a smooth map~$x \in C^\infty(\RR, \RR^2 \setminus \lbrace e,s \rbrace)$ solving \eqref{r3bp equations 1} and \eqref{r3bp equations 2} such that $x(T+t) = x(t)$ and thus also $\dot{x}(T+t)= \dot{x}(t)$ for all~$t \in \RR$.
Since the value of the autonomous Hamiltonian is constant along its flow we can assert an energy~$H(x)$ to every periodic orbit.

\subsection{Critical points and Hill's regions in the PCR3BP} \label{subsection critical points and hills regions}
We will now compute the critical points and values of the PCR3BP as we will need to refer back to some steps during the main proof later on.
This part will mainly follow \cite{abraham_marsden} and \cite{holo}.
Critical points of the effective potential and thus of the Hamiltonian are the five Lagrange points, which I will denote by 
$$
\pi_{|\crit(H)}(L_i) =: \ell_i \quad \text{for} \quad i=1, \dots 5,
$$
where
\begin{align} \label{critical points bijection}
\left.
\begin{aligned}
 & \hphantom{\big(} \pi_{|\crit(H)} \colon \crit(H) \to \crit(V) 
\quad \text{with}
\\
&\left(  \pi_{|\crit(H)} \right)^{-1}
\left( q_1 , q_2 \right)
= \left(
\left( q_1 , q_2 \right),
\left( - q_2 , q_1 \right)
\right)
\in \crit(H).
\end{aligned}
\right.
\end{align}
is a bijection of critical points of~$H$ and $V$, given by the footpoint projection of the cotangent bundle.
In this 1 to 1 correspondence also the critical values and the Morse indices (the number of negative eigenvalues of the Hessian) of critical points coincide, as the twisted momentum only adds two positive eigenvalues to the Hessian at this point.
Using the symmetry of~$\Omega$ with respect to the $q_1$-axis to our advantage, we will first search for critical points on the upper half plane and then on the $q_1$-axis by restricting the effective potential.

On the upper half plane we use a transformation to the half strip (see figure \ref{figure upper half-plane diffeo}) by

\begin{figure}
\centering
\includegraphics[scale=1]{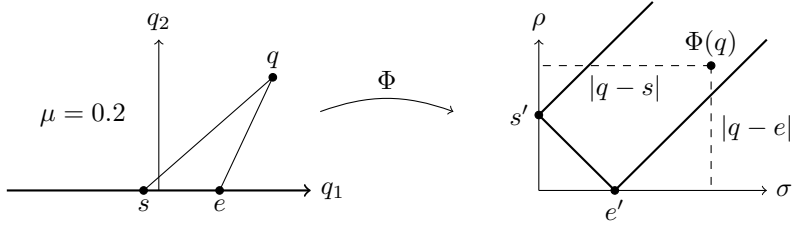}
\caption{Transformation of the upper half-plane (adapted from \cite{abraham_marsden})}
\label{figure upper half-plane diffeo}
\end{figure}

\begin{align*}
\Phi \colon \HH_+ &\to \Theta
\\
q &\mapsto \left( \betrag{q-s}, \betrag{q-e} \right),
\end{align*}
where~$\Theta$ is the diagonal half-strip in the first quadrant of $\RR^2$:
$$
\Theta:=  \left\lbrace (\sigma, \rho) \in (0, \infty)^2 \mid \sigma + \rho > 1, \betrag{ \sigma - \rho } < 1 \right\rbrace
$$
We will now look at our effective potential in the new coordinates~$(\sigma,\rho)$.
Using the alternative form~$\Omega$ of the effective potential from \eqref{Omega of r3bp}, this representation is made very easy as it becomes

\begin{align*}
U&:=\Omega \circ \Phi^{-1} \colon \Theta \to \RR
\\
U(\sigma, \rho)
&=\frac{1}{2} \left( (1-\mu) \sigma^2 + \mu \rho^2 \right) + \frac{\mu}{\rho} + \frac{1-\mu}{\sigma}.
\end{align*}
Critical points of~$U$ can now be found by differentiating:

\begin{align*}
\diff U(\sigma, \rho)
&= \frac{(1-\mu) \left( \sigma^3 - 1 \right)}{\sigma^2} \diff \sigma
+ \frac{\mu\left( \rho^3-1 \right)}{\rho^2} \diff \rho
\overset{!}{=}0,
\end{align*}
thus the only critical point of~$U$ is $(1,1)$.
Computing the Hessian of~$U$ we get
\begin{align*}
\text{Hess}_U(1,1)
=
\begin{pmatrix}
3(1-\mu) & 0 \\
0 & 3\mu
\end{pmatrix},
\end{align*}
and our single critical point of~$U$ is a local minimum.

Transforming back to coordinates in the plane, the critical point has the same distance $r_e=r_s=1$ from sun and earth and thus forms an equilateral triangle with the primaries.
We therefore get the critical point
\begin{align*}
\ell_4 := \Phi^{-1}(1,1) = \left( \frac{1}{2}-\mu, \frac{\sqrt{3}}{2} \right),
\end{align*}
which is a local minimum of~$\Omega$.
Recall, that $V(q) = - \Omega(q) + \mu(1-\mu)/2$, so~$\ell_4$ is a local maximum, \ie a point with maximal Morse index 2 of the effective potential for all~$\mu \in (0,1)$.
The Morse index is invariant under the bijection \eqref{critical points bijection}, so $L_4$ becomes a critical point of Morse index 2 of~$H$, \ie only a saddle point.

By reflection on the $q_1$-axis, we get another local maximum of~$V$ at
\begin{align*}
\ell_5:=\left( \frac{1}{2}-\mu, -\frac{\sqrt{3}}{2} \right).
\end{align*}
The Hamiltonian at these critical points takes the same critical value as on~$V$, \ie
\begin{align*}
H( L_4 ) = H(L_5)
= V( \ell_4 ) = V(\ell_5)
= \frac{\mu(1-\mu) - 3}{2} .
\end{align*}

The three remaining collinear Lagrange points are attained by restricting the effective potential~$V$ to the $q_1$-axis.
Here we will use the original effective potential~$V$.
Critical points of this one-dimensionally restricted function are then also critical points of the general function as~$V$ is symmetric with respect to the $q_1$-axis.
Therefore, let
\begin{align}\label{u definition}
\begin{split}
u
:=V\big|_{\RR \setminus \{s,e\} }  &\colon
\RR \setminus \{-\mu,1-\mu\} \to \RR
\\
x
&\mapsto
- \frac{\mu}{\betrag{x-(1-\mu)}} - \frac{1-\mu}{\betrag{x+\mu}} - \frac{x^2}{2} .
\end{split}
\end{align}
Finding explicit formulas for the critical points would mean solving quintic equations dependent on~$\mu$.
These can be found in Chapter 10 of \cite{abraham_marsden}.
For our purposes though, it suffices to know in which of the three open intervals they lie and how their energies compare.
By differentiating~$u$ twice, we get
\begin{align*}
u^{\prime \prime} (x) = - \frac{2\mu}{\betrag{x-(1-\mu)}^3} - \frac{2(1-\mu)}{\betrag{x+\mu}^3} - 1 < 0,
\end{align*}
\ie~$u$ is strictly concave.
As~$u$ tends towards~$ - \infty$ for~$x \to -\infty$, $-\mu$, $1-\mu$ as well as $\infty$, we can state that there exist exactly three local maxima of~$u$
\begin{align*}
\ell_3 \in (-\infty, -\mu) \quad \ell_1 \in (-\mu,1-\mu) \quad \ell_2 \in (1-\mu, \infty),
\end{align*}
lying in each of the connected components of the domain as can be seen in figure~\ref{figure restricted u}.

\begin{figure}
\centering
\includegraphics[scale=1]{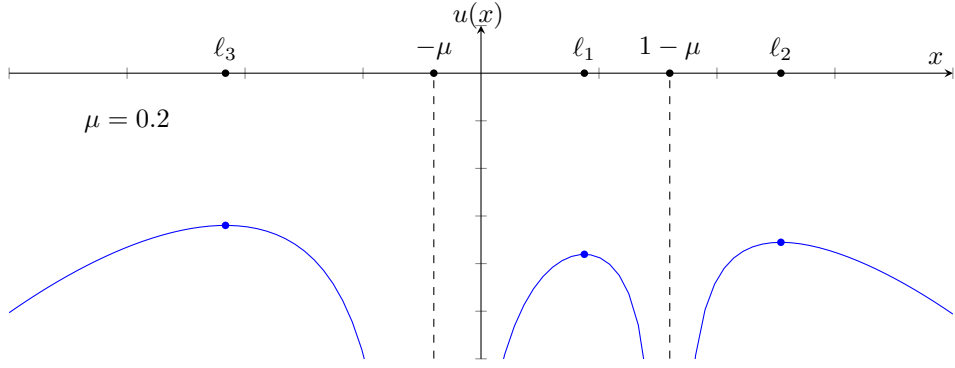}
\caption{The critical points of the effective potential when restricted to the $q_1$-axis}
\label{figure restricted u}
\end{figure}

These critical points of~$V$ are saddle points, \ie have Morse index 1, as proven by topological arguments in \cite[chapter 5, lemma 4.2]{holo} or by computing the Hessian of $\Omega$ in \cite[chapter 10.2]{abraham_marsden}.
Furthermore, this proves, that the pair of critical points~$\ell_4$ and~$\ell_5$ are global maxima of the effective potential, in view of~$V$ tending towards negative infinity at~$e$, $s$ and for large~$\betrag{q}$.

As we have not specified the exact coordinates of the critical points, we can also not state the exact critical energies.
However, we will show in which way they are ordered by following \cite{holo}.
For a point~$-\mu < x < 1-\mu$ denote by
$$\rho := \rho(x) := (1 - \mu) - x = \betrag{x - e} \in (0,1)$$
the distance of this point to earth again.
We will compute the values of the effective potential at the points~$x$ and~$x^\prime := (1 - \mu) + \rho$, which lies symmetrically to~$x$ on the other side of~$e$, by using the restricted function~$u$ from \eqref{u definition}:
\begin{align*}
u(x)-u(x^\prime)
=& - \frac{2(1-\mu)\rho^3}{1-\rho^2}
<0
\end{align*}
So the values of~$u$ and thus of~$V$ in between~$s$ and~$e$ are always smaller than symmetrically on the other side of~$e$.
This holds especially for the point~$\ell_1$ and its point opposite of earth~$\ell_1^\prime := (1 - \mu) + \rho(\ell_1)$, so their values are ordered by
$$
V(\ell_1)=u(\ell_1) < u(\ell_1^\prime) \leq u(\ell_2) = V(\ell_2),
$$
as~$\ell_2$ was the maximum of~$u$ for all points larger than~$1-\mu$.
Analogously by estimating the difference of~$u$ for a point~$x \in (-\mu,1-\mu)$ and its opposite to the sun, we get 
$$
V(\ell_1) = u(\ell_1) < u (\ell_3) = V(\ell_3).
$$
Via the identification \eqref{critical points bijection} of critical points of~$V$ and~$H$ retaining the values we have
\begin{align*}
H(L_1) < H(L_2), H(L_3).
\end{align*}
One can furthermore show, that if the sun is strictly heavier than earth, then~$H(L_2)$ is strictly less than~$H(L_3)$ and they are equal exactly at~$\mu=1/2$.
We summarise the results so far in the following lemma:

\begin{lemma}\label{lemma lagrange points}
For all~$\mu\in (0,1)$ there are five critical points of the Hamiltonian~$H$.
They are all saddle points of~$H$.
The pair of symmetric critical points~$\ell_4$ and~$\ell_5$ are global maxima of~$V$, \ie have Morse-index 2 with coordinates
$$
\ell_4 = \left( \frac{1}{2} - \mu, \frac{\sqrt{3}}{2} \right)
\quad
\ell_5 = \left( \frac{1}{2} - \mu, - \frac{\sqrt{3}}{2} \right)
\quad
\text{and}
\quad
H(L_4)  =H(L_5) = \frac{\mu(1-\mu) - 3}{2}.
$$
The remaining three critical points lie along the $q_1$-axis at
$$
\ell_3 < -\mu,
\quad
-\mu < \ell_1 < 1-\mu 
\quad
\text{and}
\quad
\ell_2 > 1-\mu,
$$
and are saddle points of~$V$, \ie their Morse indices are 1 and their energies are ordered by
$$
H(L_1) < H(L_2) , H(L_3) < H(L_4) = H(L_5).
$$
\end{lemma}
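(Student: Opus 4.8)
The plan is to exploit the reflection symmetry of the effective potential across the $q_1$-axis, $V(q_1,q_2)=V(q_1,-q_2)$ (inherited from $\Omega$ in \eqref{Omega of r3bp}), to split the search for critical points into the off-axis regime and the on-axis regime, and then to transport every conclusion to $H$ through the bijection \eqref{critical points bijection}, which preserves both critical values and Morse indices. First I would establish existence, location and coordinates, then determine the indices, and finally order the critical values.

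For the off-axis critical points I would begin by checking that $\Phi\colon\HH_+\to\Theta$, $q\mapsto(\betrag{q-s},\betrag{q-e})$, is a genuine diffeomorphism, so that critical points of $V$ in the open upper half-plane correspond bijectively to those of $U=\Omega\circ\Phi^{-1}$ on the strip $\Theta$. Differentiating $U$ shows that the only zero of $\diff U$ is $(1,1)$, and the diagonal Hessian with entries $3(1-\mu),3\mu>0$ makes it a nondegenerate local minimum of $\Omega$, hence a local maximum of $V$ of Morse index $2$. Pulling back gives $\ell_4=(\tfrac12-\mu,\tfrac{\sqrt3}{2})$; reflecting across the axis yields $\ell_5$; and evaluating $V=-\Omega+\mu(1-\mu)/2$ produces the common value $\tfrac{\mu(1-\mu)-3}{2}$.

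For the collinear points I would restrict $V$ to the axis, obtaining $u$ from \eqref{u definition}. Since $V$ is even in $q_2$, the derivative $\partial_{q_2}V$ is odd and vanishes on the axis, so a zero of $u'$ is automatically a full critical point, and conversely; thus the on-axis critical points are exactly the critical points of $u$. Strict concavity $u''<0$ together with $u\to-\infty$ at $-\infty$, $-\mu$, $1-\mu$ and $+\infty$ forces exactly one maximum in each of the three intervals, giving $\ell_3,\ell_1,\ell_2$ and, with the two off-axis points, exactly five critical points of $H$. To pin down the index I would use that evenness in $q_2$ also kills the mixed entry, so $\mathrm{Hess}_V$ is diagonal on the axis: the tangential eigenvalue is $u''(\ell_i)<0$, while the transverse one is $\partial_{q_2}^2V=\tfrac{\mu}{\betrag{x-(1-\mu)}^3}+\tfrac{1-\mu}{\betrag{x+\mu}^3}-1$. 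The main obstacle is precisely showing that this transverse entry is positive at each collinear critical point, \ie that each $\ell_i$ is a true saddle of index $1$ rather than a maximum; this is the nonobvious fact I would take from \cite[chapter~5, lemma~4.2]{holo} or the Hessian computation in \cite[chapter~10.2]{abraham_marsden}. Once it is in hand, $\ell_4,\ell_5$ are forced to be the global maxima of $V$, since $V\to-\infty$ near $e$, $s$ and at infinity while no index-$1$ point can realise a maximum.

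Finally, for the energy ordering I would compare each point in $(-\mu,1-\mu)$ with its mirror image across $e$: writing $\rho=(1-\mu)-x\in(0,1)$ and $x'=(1-\mu)+\rho$, a short computation gives $u(x)-u(x')=-\tfrac{2(1-\mu)\rho^3}{1-\rho^2}<0$. Applied at $\ell_1$ this yields $V(\ell_1)<u(\ell_1')\le u(\ell_2)=V(\ell_2)$, because $\ell_2$ maximises $u$ to the right of $e$, and the symmetric comparison across $s$ gives $V(\ell_1)<V(\ell_3)$. Transporting through \eqref{critical points bijection} and combining with the index argument (which places every collinear value strictly below the global maximum) yields $H(L_1)<H(L_2),H(L_3)<H(L_4)=H(L_5)$. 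The refinement $H(L_2)<H(L_3)$ for $\mu<\tfrac12$, with equality at $\mu=\tfrac12$, I would treat separately as a monotonicity estimate in $\mu$; I regard it as the secondary technical point of the lemma.
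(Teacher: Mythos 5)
Your proposal is correct and follows essentially the same route as the paper: the same symmetry split, the same transformation $\Phi$ to the half-strip for the triangular points, the same concavity argument for the collinear points with the index-1 claim deferred to the same references, and the same mirror-image comparison $u(x)-u(x^\prime)=-\tfrac{2(1-\mu)\rho^3}{1-\rho^2}<0$ for the ordering of the critical values. Your explicit observation that evenness in $q_2$ diagonalises the Hessian on the axis is a small but welcome elaboration of a step the paper leaves implicit.
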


With this information, we can now state how the space of all accessible positions changes as the energy decreases from infinity.
This set is known as the Hill's region and is defined as the projection of the energy level set~$\Sigma_c:=H^{-1}(c) \subset T^*(\RR^2 \setminus \lbrace e, s \rbrace)$ from phase space onto configuration space:
$$
\mathfrak{K}_c := \pi ( \Sigma_c ) = \lbrace q \in \RR^2 \setminus \lbrace e, s \rbrace \mid V(q) \leq c \rbrace
$$
From \eqref{r3bp H2form} we see that the Hill's region corresponds to the sublevel set of the effective potential.
In Birkhoff's work \cite{birkhoff} the notion of the oval of zero velocity is used, which denotes the boundary of the Hill's region excluding the primaries or equivalently the level set of the effective potential:
$$\mathfrak{O}_c := \partial \mathfrak{K}_c \setminus \lbrace e,s \rbrace = V^{-1}(c)$$
As the gradient~$\nabla V (q)$ is a normal vector on the level set~$\mathfrak{O}_c$ for every~$q \in V^{-1}(c)$ and regular value~$c$, the tangent to the oval of zero velocity is spanned by
$$
\left\langle
\left( \frac{\partial V}{\partial q_2} (q) , - \frac{\partial V}{\partial q_1}  (q) \right)
\right\rangle_{\RR}
= \nabla V(q)^{\perp}.
$$
\begin{figure}
\centering

\includegraphics[scale=1]{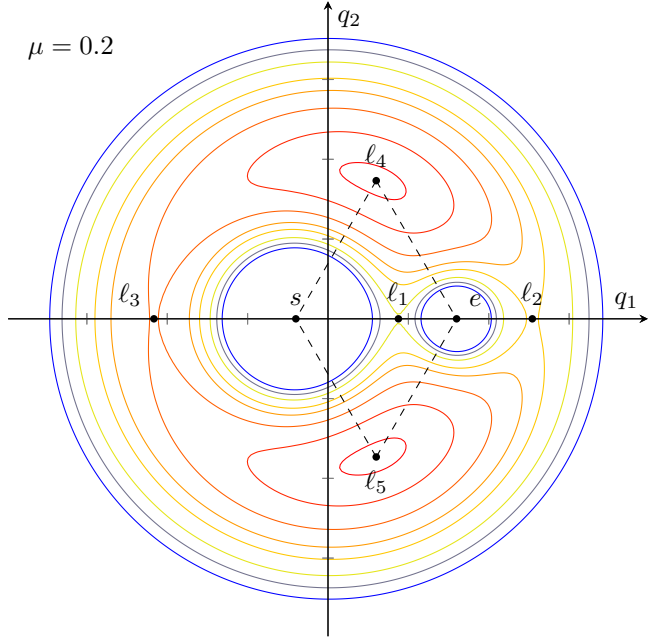}

\caption{Ovals of zero velocity and critical points in the restricted three-body problem}
\label{figure Hills region in r3bp}
\end{figure}
Since~$H$ is symmetric with respect to the $q_1$-axis, so is~$\mathfrak{K}_c$ and~$\mathfrak{O}_c$, and from the type and location of critical points of~$V$ we can state the changes in topology of the Hill's region as the energy varies.
Maxima of~$V$ are attained at~$\ell_4$ and $\ell_5$, so energies~$c \geq V(\ell_4) = V(\ell_5)= (\mu(1-\mu)-3)/2$ above the common critical value result in the Hill's region being all of~$\RR^2\setminus \{e,s\}$, \ie movement is possible  everywhere apart from collisions with the primaries.
At energies below this value, holes will appear around~$\ell_4$ and~$\ell_5$.
These will grow larger with decreasing energy until at the next critical value, they connect to a single hole on the far side of the heavier primary.
If the sun is strictly heavier than earth, \ie $0<\mu<1/2$ then this will be at~$\ell_3$.
This horseshoe shaped hole will grow larger, as~$c$ declines further until its ends meet at the critical point on the far side of the lighter primary, in our case at~$\ell_2$.
At this point, we now have two connected components:
One bounded peanut-shaped component~$\mathfrak{K}_c^b = \pi (\Sigma_c^b)$ containing~$e$ and~$s$ in its closure and one unbounded outer component~$\mathfrak{K}_c^u = \pi (\Sigma_c^u)$.
As the energy declines further yet, the bounded component narrows between the two primaries until they disconnect after surpassing the final critical value of~$\ell_1$.
Finally, for all energies below~$V(\ell_1)$ the Hill's region is divided into three connected components:
A punched disc-like component surrounds each one of the two primaries and will be denoted by~$\mathfrak{K}_c^e = \pi (\Sigma_c^e)$ and~$\mathfrak{K}_c^s = \pi (\Sigma_c^s)$, whereas the unbounded part will remain~$\mathfrak{K}_c^u$.
These changes of the corresponding ovals of zero velocity are visualized in figure \ref{figure Hills region in r3bp} for $\mu=0.2$.
Notations of the oval describing the bounded and unbounded parts will be analogous to the Hill's regions.

\subsection{Some elementary statements about Morse functions} \label{subsection morse theory}
A smooth function is called Morse if all critical points are non-degenerate, \ie if the Hessian at this point is invertible.
As mentioned before the Morse index of a critical point is the number of negative eigenvalues of the Hessian and both non-degeneracy and Morse index of critical points are invariant under coordinate change, thus can be defined for general smooth functions $f \in C^\infty (M, \RR)$ on manifolds.
From these simple definitions we get that all critical points are isolated, which can be easily seen using the Morse Lemma, which states that there exist local coordinates $u$ around a critical point~$p$, such that $u(0)=p$ and
$$
f(u) = f(p) - \sum_{i=1}^{\mu_p} u_i^2 + \sum_{i=\mu_p + 1}^n u_i^2,
$$
where~$\mu_p$ is the Morse index of~$p$.
Obviously the only critical point in this chart is~$p$.

\begin{lemma}\label{lemma morse critical points isolated}
Let~$M$ be a smooth manifold of dimension~$n$ and~$f \colon M \to \RR$ a Morse function.
Then all critical points of~$f$ are isolated, \ie for every $p \in \crit f$ there exists an open neighbourhood~$U$ of~$p$, such that $U \cap \crit f = \{ p \}$.
\end{lemma}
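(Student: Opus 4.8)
The plan is to reduce everything to the Morse Lemma, which has already been recalled above and supplies the entire mechanism. Fix an arbitrary critical point $p \in \crit f$. First I would apply the Morse Lemma to obtain a chart $U$ around $p$ together with local coordinates $u=(u_1,\dots,u_n)$ satisfying $u(p)=0$ and
$$
f(u) = f(p) - \sum_{i=1}^{\mu_p} u_i^2 + \sum_{i=\mu_p + 1}^n u_i^2 .
$$
Since coordinate changes are diffeomorphisms, a point of $U$ is critical for $f$ exactly when the differential of this normal form vanishes there, so it suffices to locate the zeros of $\diff f$ in these coordinates.

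Next I would simply differentiate the normal form. One computes
$$
\diff f = -2 \sum_{i=1}^{\mu_p} u_i \diff u_i + 2 \sum_{i=\mu_p + 1}^n u_i \diff u_i ,
$$
and since the $\diff u_i$ form a basis of the cotangent space at every point of the chart, this one-form vanishes if and only if $u_1 = \dots = u_n = 0$, that is, only at the centre $p$ of the chart. Hence $U \cap \crit f = \{ p \}$, and as $p$ was arbitrary every critical point is isolated.

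As an alternative that avoids invoking the Morse Lemma, I could argue directly via the inverse function theorem. Choosing any chart around $p$ and writing $\diff f$ in the induced coordinates as the smooth map
$$
x \mapsto \left( \frac{\partial f}{\partial x_1}(x), \dots, \frac{\partial f}{\partial x_n}(x) \right)
$$
from an open subset of $\RR^n$ to $\RR^n$, its Jacobian at $p$ is exactly the Hessian of $f$ at $p$. Non-degeneracy of $p$ means this Jacobian is invertible, so by the inverse function theorem $\diff f$ is a local diffeomorphism near $p$; in particular it is injective on a small neighbourhood $U$, and since $\diff f(p)=0$ the value $0$ is attained nowhere else in $U$.

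I do not expect any genuine obstacle here, as the statement is an immediate consequence of the local normal form. The only point requiring a little care is the coordinate-invariance of the notion of a critical point---a point is critical precisely when $\diff f$ vanishes, which is intrinsic---so that the zeros found in the Morse chart (or in the inverse-function-theorem chart) really are the critical points of $f$ on $U$ and not an artefact of the chosen coordinates.
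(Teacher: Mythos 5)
Your main argument is correct and is precisely the paper's own approach: the paper proves this lemma by invoking the Morse Lemma and observing that in the normal-form chart the only critical point is the centre, which is exactly what you do (with the differentiation of the normal form spelled out explicitly). Your alternative via the inverse function theorem is also valid and has the mild advantage of not requiring the Morse Lemma, but it is not needed here.
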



As a direct corollary we get that Morse functions on closed Manifolds have finitely many critical points:

\begin{corollary} \label{corollary finite critical points}
Let~$M$ be a closed differentiable manifold and $f$ a Morse function, then the number of critical points of~$f$ is finite.
\end{corollary}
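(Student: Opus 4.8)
The plan is to combine the isolation of critical points from Lemma~\ref{lemma morse critical points isolated} with the compactness of~$M$ in a standard covering argument. First I would observe that the critical set~$\crit f$ is closed in~$M$: its complement is the set of points at which~$\diff f$ does not vanish, which is open because~$\diff f$ is continuous. Since~$M$ is closed---\ie compact and without boundary---and a closed subset of a compact space is compact, it follows that~$\crit f$ is itself compact.

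Next I would invoke Lemma~\ref{lemma morse critical points isolated} to produce, for each~$p \in \crit f$, an open neighbourhood~$U_p \subseteq M$ with $U_p \cap \crit f = \{ p \}$. The family~$\{ U_p \}_{p \in \crit f}$ is then an open cover of the compact set~$\crit f$, so it admits a finite subcover~$U_{p_1}, \dots, U_{p_k}$. Because each~$U_{p_j}$ contains exactly one critical point, namely~$p_j$, every element of~$\crit f$ must equal one of the~$p_j$, whence $\crit f = \{ p_1, \dots, p_k \}$ is finite.

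I do not anticipate a genuine obstacle here, since the assertion is essentially that a discrete closed subset of a compact manifold is finite, the Morse hypothesis entering only through the discreteness furnished by the preceding lemma. The one point deserving a little care is to take the isolating neighbourhoods as open subsets of~$M$ itself rather than of some coordinate chart, so that the finite subcover extracted from compactness really does exhaust all of~$\crit f$.
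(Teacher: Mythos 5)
Your argument is correct and is exactly the intended route: the paper presents this statement as a direct consequence of Lemma~\ref{lemma morse critical points isolated} together with compactness of~$M$, which is precisely the covering argument you carry out (the paper itself leaves the details unwritten). Your extra care in noting that~$\crit f$ is closed, hence compact, and that the isolating neighbourhoods should be open in~$M$, makes the sketch fully rigorous.
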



The last lemma we will need as preparation for the theorem, is essentially a step in Birkhoff's proof \cite[chapter 17]{birkhoff}, which he claimed to be obvious.
In the situation of the proof, it is indeed very apparent.
A rigorous proof of a more general statement can be achieved by using the implicit function theorem:

\begin{lemma} \label{lemma number of critical points constant}
Let~$M$ be a closed Manifold and $f \colon \RR \times M \to \RR$ a smooth 1-parameter family of Morse functions.
Then the number of critical points of $f_r := f(r, \spacedot)$ is constant.
\end{lemma}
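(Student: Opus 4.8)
The plan is to realise the critical points of the family as the zero set of the fibrewise differential of $f$ and to apply the implicit function theorem, using the Morse hypothesis to guarantee its applicability. Write $N(r) := \# \crit f_r$; by \autoref{corollary finite critical points} this number is finite for every $r$, since $M$ is closed and each $f_r$ is Morse. Because $\RR$ is connected, it suffices to show that $N$ is locally constant.

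First I would fix $r_0 \in \RR$ and list $\crit f_{r_0} = \{ p_1, \dots, p_k \}$. Working in a chart around each $p_i$ with coordinates $x = (x_1, \dots, x_n)$, set $g(r,x) := \left( \partial_{x_1} f, \dots, \partial_{x_n} f \right)(r,x)$, so that $(r,p)$ is a fibrewise critical point precisely when $g = 0$. The Jacobian $\partial g / \partial x$ evaluated at $(r_0, p_i)$ is exactly the Hessian of $f_{r_0}$ at $p_i$, which is invertible because $f_{r_0}$ is Morse. The implicit function theorem then yields neighbourhoods $V_i \ni r_0$ and $U_i \ni p_i$ together with a smooth branch $\gamma_i \colon V_i \to U_i$ with $\gamma_i(r_0) = p_i$, such that inside $V_i \times U_i$ the equation $g = 0$ holds only along the graph of $\gamma_i$. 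After shrinking I may assume the $U_i$ are pairwise disjoint.

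For $r$ in $V := \bigcap_i V_i$ the points $\gamma_1(r), \dots, \gamma_k(r)$ sit in disjoint charts and are therefore $k$ distinct critical points of $f_r$, so $N(r) \geq k$; this lower bound is the easy half. The crux is the matching upper bound, \ie ruling out critical points that appear away from the $p_i$. Here I would argue by contradiction using compactness. If no neighbourhood of $r_0$ worked, there would be a sequence $r_n \to r_0$ and critical points $q_n \in \crit f_{r_n}$ with $q_n \notin \bigcup_i U_i$. The set $K := M \setminus \bigcup_i U_i$ is closed in the closed manifold $M$, hence compact, so after passing to a subsequence $q_n \to q \in K$. Continuity of $g$ forces $\diff f_{r_0}(q) = 0$, whence $q \in \{ p_1, \dots, p_k \}$, contradicting $q \in K$. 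Thus on some $V^\prime \subseteq V$ every critical point of $f_r$ lies in $\bigcup_i U_i$, where by the local uniqueness in the implicit function theorem it must coincide with some $\gamma_i(r)$. Hence $N(r) = k$ on $V^\prime$, which establishes local constancy and the lemma.

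I expect this compactness step---controlling critical points that are not small perturbations of the $p_i$---to be the main obstacle, as it is precisely where the hypothesis that $M$ is closed is indispensable, preventing critical points from being lost ``at infinity''. The assumption that \emph{every} $f_r$ is Morse, and not merely $f_{r_0}$, is what forbids a pair of critical points being born or annihilated through a degenerate fibre, which is the phenomenon that would otherwise break constancy of $N$.
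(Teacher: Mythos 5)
Your proposal is correct and follows essentially the same route as the paper's own proof: finiteness of $\crit f_{r_0}$ via Corollary \ref{corollary finite critical points}, the implicit function theorem (applicable by the Morse condition) producing locally unique branches $\gamma_i(r)$ of critical points in disjoint neighbourhoods $U_i$ for the lower bound, and a compactness-plus-contradiction argument on $M \setminus \bigcup_i U_i$ for the upper bound, concluding by connectedness of $\RR$. The only cosmetic difference is that you phrase the contradiction hypothesis directly as a critical point escaping $\bigcup_i U_i$, whereas the paper assumes the count exceeds $N$ and then uses uniqueness of the branches to extract such a point; the two are logically equivalent.
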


\begin{proof}
We show, that the function~$ c \colon \RR \to \NN$ whereas $c(r) := \# \crit f_r$, is locally constant:

Let~$r_0 \in \RR$ be a real number, then the number of critical points $\# \crit f_{r_0} =: N < \infty$ is finite by corollary \ref{corollary finite critical points}, because~$f_{r_0}$ is Morse and~$M$ closed.
We will call theses critical points~$\{ p_1, \dots , p_N \} = \crit f_{r_0}$.
The proof, that there exists an open neighbourhood of~$r_0$ in~$\RR$ on which the number of critical points is exactly~$N$, will be split into two steps:
First, we will show, that in an open neighbourhood the number of critical points is at least~$N$.
After that, we precede to prove, that~$c(r)$ is at most~$N$ close to~$r_0$.

For both steps we will use the implicit function theorem, therefore, we will first apply it to our situation:
Working in a chart again, we know, that all critical points~$p_i$ of~$f_{r_0}$ satisfy $Df_{r_0} (p_i) = 0$.
Since~$f_{r_0}$ is Morse, we also know, that the Hessian matrix~$H_{f_{r_0}}(p_i)$ is invertible for all~$i = 1, \dots , N$, hence the requirements of the implicit function theorem are fulfilled, such that we can state:

There exist open neighbourhoods~$U_i$ of~$p_i$, positive~$\epsilon_i > 0$ and unique maps
\begin{gather*}
y_i \colon (r_0-\epsilon_i , r_0 + \epsilon_i) \to U_i,
\qquad \text{such that}
\\
y_i(r_0) = p_i
\quad \text{and}
\\
\forall r \in (r_0 - \epsilon_i , r_0 + \epsilon_i ), p \in U_i:
\quad
y_i(r) = p
\iff
Df_{r_0}(p) = 0.
\end{gather*}
Without loss of generality we can assume $U_i \cap U_j = \emptyset$ if~$i \neq j$.
\\
\underline{Step 1}
 ($ \exists \epsilon >0 : \forall r \in (r_0 - \epsilon , r_0 + \epsilon): \# \crit f_r \geq N$):
\\
Set~$\epsilon := min \{\epsilon_1, \dots , \epsilon_N \}$, then there are~$N$ distinguished critical points at~$y_i(r) \in U_i$, since the open sets~$U_i$ are disjoint.
\pagebreak
\\
\underline{Step 2}
($
\exists \epsilon > 0: \forall r \in (r_0 - \epsilon , r_0 + \epsilon): \# \crit f_r \leq N
$):
\\
Suppose not, then for all~$n \in \NN$ there exists some $r_n \in (r_0 - 1/n , r_0 + 1/n)$ such that there are strictly more than~$N$ critical points of~$f_{r_n}$ on~$M$.
Since the functions~$y_i$ are unique, we can choose a critical point~$p^{(n)}$ for every~$n \in \NN$, such that $p^{(n)} \notin \bigcup_{i=1}^N U_i$.
This gives us a sequence of critical points in~$M^\prime := M \setminus \bigcup_{i=1}^N U_i$.
$M$ and therefore also~$M^\prime$ are compact, so there exists a converging subsequence~$p^{(n_k)} \to p_0 \in M^\prime$, \ie $p_0 \neq p_i$ for $1 \leq i \leq N$.
We can now compute the limit
$$
0=
\lim_{k \to \infty}
\underbrace{
 Df_{r_{n_k}} \left( p^{(n_k)} \right)
 }_{ = 0}
= Df_{r_0} \left( p_0 \right)
$$
and get another critical point~$p_0$ of~$f_{r_0}$, contradicting the assumption, that there were~$N$ critical points to start with.
This concludes the proof, that $\# \crit f_r$ is locally constant, implying it is constant on all of~$\RR$.
\end{proof}
With this Lemma, we are now ready to start to prove the main theorem of this work.

\section{Base cases} \label{section base cases}
The first base case is for an energy below the first critical value and comes directly from Birkhoff \cite{birkhoff}.
It is included here in order to complete the review of Birkhoff's original proof.
The second base case extends this proof to all energies below the second critical value and thus enables a stronger statement with the possibility of more interesting syzygy sequences.
What we want to show for each base case is that there are exactly two vertical tangents to the bounded part of the oval of zero velocity and them being the ones induced by symmetry along the $q_1$-axis.
Recall from the preparation that a vertical tangent corresponds to the derivative~$V_{q_2}:= \partial V /\partial q_2$ of the effective potential with respect to $q_2$ vanishing while $V_{q_1}:= \partial V/\partial q_1$ remains non-zero.

\subsection{Below the first critical value} \label{subsection below first}
Here we consider the limiting cases $\mu = 0$ and $1$, \ie where one of the primaries has zero mass.
In this situation the PCR3BP turns into the rotating Kepler problem, \ie the Kepler problem in rotating coordinates:
\begin{align}
\begin{split}
H(q,p)
&= \frac{1}{2}\left(
(p_1 + q_2)^2 + (p_2 - q_1)^2
\right) - \frac{1}{\betrag{q}} - \frac{1}{2}q^2
\\
&= \frac{1}{2}p^2 - \frac{1}{\betrag{q}} + q_2 p_1 - q_1 p_2
\end{split}
\label{equations rotating kepler}
\end{align}
Note, that this Hamiltonian is no longer a Morse function, since the critical set is the unit circle.
In this simple case, the Hill's regions are either all of the plane minus the origin for energies above the critical value $c=-1.5$, or a punched disc for the bounded Hill's region and the plane minus a larger disc as the unbounded part below the critical value.
Vertical tangents of the ovals of zero velocity obviously only lie along the $q_1$-axis since $V_{q_2}=q_2(1/\betrag{q}^3-1)$ vanishes at $q_2=0$ and the bounded component has $\betrag{q}<1$ making $(1/\betrag{q}^3-1)$ strictly positive.
$V_{q_1}$ can not vanish simultaneously as there are no other critical points.

\subsection{Between the first and the second critical value} \label{subsection below second}
The second base case will use the symmetry for~$\mu=1/2$ and an energy just above the first critical value.
Here, the Hill's region is still reasonably small and we can show, that we can enclose the Hills region in a small enough neighbourhood of the origin, such that within this region the partial derivative~$V_{q_2}$ can not vanish.

At $\mu=1/2$ the effective potential becomes
\begin{align*}
V(q)
= - \frac{1}{2\betrag{q-(\frac{1}{2},0)}} - \frac{1}{2 \betrag{q+(\frac{1}{2},0)}} - \frac{q^2}{2}
\end{align*}
which is additionally symmetric with respect to the $q_2$-axis.
Hence, it suffices to show that on the first quadrant the partial derivative~$V_{q_2}$ does not vanish outside of $\lbrace q_2=0 \rbrace$.
We compute the derivative
\begin{align*}
\diff V(q)
= \left( \frac{ q_1-\frac{1}{2} }{ 2\betrag{q-e}^3 } + \frac{ q_1+\frac{1}{2} }{ 2 \betrag{q-s}^3 } - q_1 \right)\diff q_1
+ q_2 \left( \frac{1}{2 \betrag{q-e}^3} + \frac{1}{2\betrag{q-s}^3} -1 \right) \diff q_2
\end{align*}
to check that $(0,0)$ is the critical point $\ell_1$ and takes the value $V(0)=-2$.
Next, we estimate the value of~$V$ on the unit circle by using the symmetry and restricting to the first quadrant as shown in figure \ref{estimate unit circle}:
\begin{align*}
V(q)
&\geq - \frac{1}{2\frac{1}{2}} - \frac{1}{2 \frac{\sqrt{5}}{2}} - \frac{1}{2}
> - 2
\qquad  \forall \betrag{q}=1 \ q_1,q_2 \geq 0
\end{align*}
\begin{figure}
\centering
\begin{subfigure}{.3\textwidth}
  \centering
  \includegraphics[scale=1]{estimate_unit_circle.pdf}
  \caption{On the unit circle}
  \label{estimate unit circle}
\end{subfigure}
\begin{subfigure}{.3\textwidth}
  \centering
  \includegraphics[scale=1]{estimate_lid.pdf}
  \caption{Slicing the lid off}
  \label{estimate lid}
\end{subfigure}
\begin{subfigure}{.3\textwidth}
  \centering
  \includegraphics[scale=1]{estimate_zeros.pdf}
  \caption{For the set of zeros}
  \label{figure estimate zeros}
\end{subfigure}
\caption{Estimates of $\betrag{q-e}$, $\betrag{q-s}$ and $\betrag{q}$}
\label{figure estimates}
\end{figure}
Since $e$ and $s$ are contained in~$B_1(0)$, there exists some~$\epsilon_0 > 0$ such that the single bounded component of the Hill's region~$\mathfrak{K}_{-2+\epsilon_0}^b \subset \lbrace V(q) \leq -2+\epsilon_0 \rbrace$, which has $e$ and $s$ in its closure, is contained within~$B_1(0)$.
Additionally, we compute that inside the unit ball the $q_2$-values can be restricted to 2/3 by using lower estimates of~$\betrag{q-e}$ and~$\betrag{q-s}$, and an upper estimate of~$\betrag{q}$ as shown in figure \ref{estimate lid}:
\begin{align*}
V(q)
&> - \frac{1}{2 \cdot \frac{2}{3}} - \frac{1}{2 \cdot \frac{5}{6}} - \frac{1}{2}
\\
&= - \frac{37}{20} > -2 \qquad \forall \betrag{q}<1, q_2\geq \frac{2}{3}, q_1 \geq 0
\end{align*}
So, there exists some~$\epsilon_0 \geq \epsilon > 0$ such that~$\mathfrak{K}_{-2+\epsilon}^b$ is contained in $B_1(0) \cap \lbrace \betrag{q_2}< 2/3 \rbrace$.
This concludes the step for restricting the Hill's region and we continue by excluding the set of zeros of the partial derivative~$V_{q_2}$ from this region.

Here, we estimate as in figure \ref{figure estimate zeros}, again using symmetry to restrict to the first quadrant, to get
\begin{align*}
\frac{1}{2 \betrag{q-e}^3} + \frac{1}{2 \betrag{q-s}^3}
&>
\frac{1}{2 \frac{125}{216}} + \frac{1}{2 \frac{27}{8}}
=\frac{3416}{3375}
>1.
\end{align*}
We can conclude that inside of $B_1(0) \cap \lbrace \betrag{q_2} < 2/3 \rbrace$ the partial derivative $V_{q_2}$
only vanishes along the $q_1$-axis.
Summarizing the results of this section we state:

\begin{lemma}\label{Lemma base case2}
There exists some~$ \epsilon > 0$ such that inside the bounded Hill's region~$\mathfrak{K}_{-2+\epsilon}^b$ for $\mu=1/2$ the partial derivative~$V_{q_2}$ only vanishes along the $q_1$-axis.
\end{lemma}

\section{Continuation to all mass ratios} \label{section continuation}
The final part of the proof is to continue the base case(s) to all mass ratios and to all energies below the second critical value.
We will discuss Birkhoff's continuation below the first critical value along the way, but focus on the proof of the main (and stronger) statement, which is:

\begin{lemma} \label{Omegayzeros}
Inside the bounded Hill's region~$\mathfrak{K}_c^b$ for energy $c<H(L_2),H(L_3)$ below the second critical value and for all mass ratios $0 < \mu < 1$, the roots of $V_{q_2}(q)$ are precisely the points on the $q_1$-axis.
\end{lemma}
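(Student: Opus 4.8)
I would begin by differentiating $V$ in $q_2$ to isolate the geometry. Writing out the derivative gives the factorisation
\[
V_{q_2}(q) = q_2\left(\frac{\mu}{\betrag{q-e}^3} + \frac{1-\mu}{\betrag{q-s}^3} - 1\right) =: q_2\, g(q),
\]
so the roots of $V_{q_2}$ split into the $q_1$-axis $\{q_2 = 0\}$ and the level curve $C := g^{-1}(0)$. Since $g \to +\infty$ at the primaries $e,s$ and $g \to -1$ as $\betrag{q} \to \infty$, the set $\{g > 0\}$ is a bounded open neighbourhood of $\{e,s\}$ whose boundary is $C$; moreover $g(\ell_4) = g(\ell_5) = 0$, so $C$ passes through the equilateral points $L_4, L_5$, which by \autoref{lemma lagrange points} sit strictly above the second critical value and hence outside every $\mathfrak{K}_c^b$ in question. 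The lemma is therefore equivalent to the containment $\overline{\mathfrak{K}_c^b} \subset \{g > 0\}$, \ie to $C \cap \overline{\mathfrak{K}_c^b} = \emptyset$.

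\textbf{Energy nesting and vertical tangents.} The curve $C$ depends only on $\mu$, not on $c$, while the bounded Hill's regions are nested and exhaust the bounded component $W_\mu$ of $\{V < c_2(\mu)\}$, where $c_2(\mu) := \min\{H(L_2), H(L_3)\}$, as $c \uparrow c_2(\mu)$. Thus it suffices to prove $C \cap W_\mu = \emptyset$ for each $\mu$. Away from the primaries $W_\mu$ is foliated by the ovals $\mathfrak{O}_c^b$, and on a regular oval the tangent is spanned by $(V_{q_2}, -V_{q_1})$, so the derivative of the height function $q_1\big|_{\mathfrak{O}_c^b}$ along the curve equals $V_{q_2}$. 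Hence the zeros of $V_{q_2}$ on an oval are exactly the critical points of $q_1\big|_{\mathfrak{O}_c^b} \colon \mathfrak{O}_c^b \to \RR$ — the vertical tangents — and by the $q_1$-axis symmetry the two axis crossings (the extreme left and right points) are always among them. That $C$ meets no oval off the axis is then the statement that $q_1\big|_{\mathfrak{O}_c^b}$ has precisely these two critical points.

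\textbf{Continuation.} For a peanut-shaped oval, \ie $H(L_1) < c < c_2(\mu)$, the level set $\mathfrak{O}_c^b$ is a single smooth circle, so I would fix a model circle $S^1$, choose a smooth family of diffeomorphisms onto the ovals over the connected parameter region $\{(\mu,c) : 0 < \mu < 1,\ H(L_1) < c < c_2(\mu)\}$, and regard the pulled-back functions $q_1\big|_{\mathfrak{O}_c^b}$ as a smooth family of Morse functions on $S^1$. By \autoref{lemma number of critical points constant} the number of critical points is then locally, hence globally, constant on this region. The second base case (\autoref{Lemma base case2}) gives exactly the two axis tangents at $\mu = 1/2$ just above the first critical value, anchoring the constant at two (the rotating-Kepler limit of \autoref{subsection below first} is consistent). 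Feeding this back through the foliation — a connectedness argument, since any point at which $C$ entered $W_\mu$ would force $C$ to cross some oval off the axis on its way out to $L_4$, together with a one-dimensional check that $g > 0$ along the $q_1$-axis inside $W_\mu$ — upgrades the absence of off-axis vertical tangents to $C \cap W_\mu = \emptyset$.

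\textbf{Main obstacle.} The delicate point is not the bookkeeping but the hypothesis of \autoref{lemma number of critical points constant}: one must guarantee that $q_1\big|_{\mathfrak{O}_c^b}$ stays Morse over the entire parameter region, \ie that no degenerate off-axis vertical tangent ever forms. Such a degeneracy is precisely an off-axis point with $g = 0$ lying on an oval, so ruling it out is essentially the original claim — the continuation lemma only transports a base-case count once this transversality of $C$ against the ovals has been secured. This is where Birkhoff's estimates, extended to energies up to the second critical value as begun in the second base case, must do the quantitative work. I would expect the behaviour near $L_2$ and $L_3$, where the oval develops high curvature as $c \uparrow c_2(\mu)$, and the singular endpoints $\mu \in \{0,1\}$, where the Hamiltonian ceases to be Morse, to require the most care.
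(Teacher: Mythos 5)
Your setup coincides with the paper's own proof: the factorisation $V_{q_2}=q_2\,g$ (the paper's $W_\mu$), the reading of zeros of $V_{q_2}$ on an oval as vertical tangents, \ie critical points of $q_1\big|_{\mathfrak{O}_c^b}$, the continuation via lemma~\ref{lemma number of critical points constant} anchored at lemma~\ref{Lemma base case2}, and a final connectedness/contradiction step. The genuine gap sits exactly where you place your ``main obstacle'', and your diagnosis of it is wrong: verifying that the family stays Morse is \emph{not} ``essentially the original claim'', and the whole content of the paper's proof is that it is strictly weaker. You conflate critical points with \emph{degenerate} critical points. An off-axis point of $C=g^{-1}(0)$ lying on an oval is merely a critical point of $q_1\big|_{\mathfrak{O}}$; Morse-ness only forbids degenerate ones, \ie vertical points of inflexion where $V_{q_2}$ and $V_{q_2^2}$ vanish simultaneously. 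Off the axis such a degeneracy is impossible for reasons that have nothing to do with where the Hill's region lies: at a point with $q_2\neq 0$, $V_{q_2}=0$ and $V_{q_2^2}=0$ the quotient rule would force
$$
\frac{\partial}{\partial q_2}\left(\frac{V_{q_2}}{q_2}\right)
=\frac{q_2V_{q_2^2}-V_{q_2}}{q_2^2}=0,
\qquad\text{yet}\qquad
\frac{\partial}{\partial q_2}\left(\frac{V_{q_2}}{q_2}\right)
=\frac{\partial g}{\partial q_2}
=-3q_2\left(\frac{\mu}{\betrag{q-e}^5}+\frac{1-\mu}{\betrag{q-s}^5}\right)\neq 0 .
$$
So off-axis vertical tangents, should they exist, are automatically non-degenerate; they can neither be created nor destroyed away from the axis, there is no circularity, and none of the ``quantitative work'' you anticipate near $L_2$, $L_3$ or near $\mu\in\{0,1\}$ is needed (the continuation path stays inside $\mu\in(0,1)$ in any case).

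What you must still exclude---and never address---are degenerate critical points \emph{on} the axis, which is the only place in the Morse verification where the hypothesis $c<H(L_2),H(L_3)$ enters. On the axis, degeneracy means $g(q_1,0)=0$. Between the primaries this is impossible since both denominators are smaller than $1$. For $q_1>1-\mu$, substituting $g(q_1,0)=0$ into $V_{q_1}$ yields $u^\prime(q_1)=(1-\mu)\left(\betrag{q_1+\mu}^{-3}-1\right)<0$, whereas $u$ is strictly increasing on $(1-\mu,\ell_2)$ and the axis portion of $\mathfrak{K}_c^b$ to the right of earth stops short of $\ell_2$ precisely because $c<H(L_2)$; the case $q_1<-\mu$ is symmetric, using $c<H(L_3)$. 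With these cases settled, every function in your family is Morse, the count of vertical tangents is constantly two, and your concluding step goes through. As written, however, your proposal assembles the machinery and then declares the decisive step to be equivalent to the statement being proved; it is this case analysis, not further estimates, that constitutes the actual proof.
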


\begin{proof}
Obviously all points of $\lbrace q_2=0 \rbrace$ are zeros of
\begin{align} \label{Omegay}
V_{q_2,\mu}(q)
= q_2 \left( \frac{\mu}{\betrag{q-e}^3} + \frac{1-\mu}{\betrag{q-s}^3} - 1 \right) =: q_2 W_\mu(q)
\end{align}
It remains to show, that inside of~$\mathfrak{K}_c^b$ there are no roots of $W_\mu$.
\\
The first claim is that~$0$ is a regular value:
We prove this by computing where critical points lie and comparing that to the values attained in this region.
The differential of $W_\mu$ is
\begin{align*}
\diff W_\mu (q)
=
-3 \left( \frac{\mu(q_1-1+\mu)}{\betrag{q-e}^5} + \frac{(1-\mu)(q_1+\mu)}{\betrag{q-s}^5} \right) \diff q_1
-3 q_2 \left( \frac{\mu}{\betrag{q-e}^5} + \frac{1-\mu}{\betrag{q-s}^5} \right) \diff q_2
\end{align*}
and thus a critical point requires $q_2$ and simultaneously $\mu(q_1-1+\mu)/\betrag{q-e}^5 + (1-\mu)(q_1+\mu)/\betrag{q-s}^5$ to vanish.
This does not happen if~$\betrag{q_1} \geq 1$, since if $q_1 \geq 1$ then the latter term is bound from below by
\begin{align*}
\frac{\mu^2}{\betrag{q_1-1+\mu}^5} + \frac{1-\mu^2}{\betrag{q_1+\mu}^5}>0
\end{align*}
which is strictly positive.
Analogously if~$q_1 \leq -1$ then the later term is bound from above by
\begin{align*}
\frac{\mu(-2+\mu)}{\betrag{q_1-1+\mu}^5} - \frac{(1-\mu)^2}{\betrag{q_1+\mu}^5} < 0.
\end{align*}
So, critical values can only exist if $\betrag{q_1}<1$ and $q_2=0$ for all mass ratios~$\mu \in [0,1]$.
Estimating the values of~$W_\mu$ in this region gives
\begin{align*}
\frac{\mu}{\betrag{q_1-1+\mu}^3} + \frac{1-\mu}{\betrag{q_1+\mu}^3}-1 > 0.
\end{align*}
This follows by checking the cases $-1<q_1<-\mu$, $-\mu < q_1 <1-\mu$ and $1-\mu < q_1 < 1$.
So the claim is proven that $0$ is a regular value of~$W_\mu$ for all~$\mu$ and thus the preimage~$W_\mu^{-1}(0)$ is a smooth 1-manifold diffeomorphic to $W_0^{-1}(0)=S^1$.
\\
Suppose for some $\mu \in (0,1)$ and $c < V_\mu(\ell_2)$ that $\mathfrak{K}_{\mu,c}^b \cap W_\mu^{-1}(0) \neq \emptyset$, \ie for some point inside the bounded Hill's region the factor~$W_\mu$ of $V_{q_2,\mu}$ vanishes.
Choose a smooth path (figure \ref{figure path})
\begin{align*}
\gamma \colon [0,1] &\to (0,1) \times \RR
\\
t & \mapsto (\mu(t), c(t))
\end{align*}
\begin{figure}
\centering
\includegraphics[scale=1]{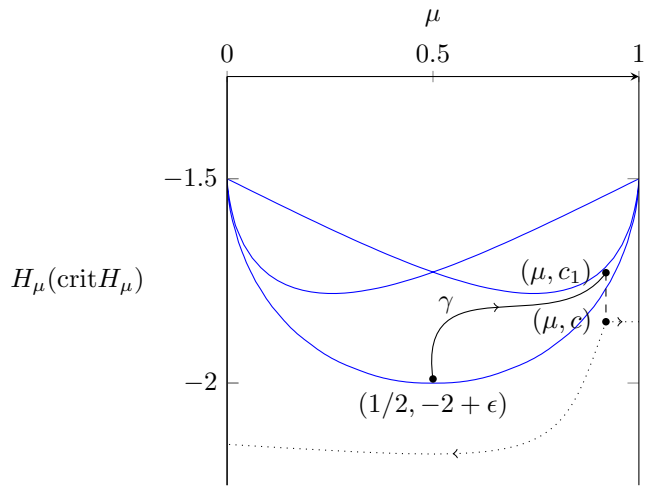}
\caption{Path~$\gamma$ of mass ratios and energies between critical values}
\label{figure path}
\end{figure}
from $\gamma(0) = (1/2, -2+\epsilon)$ to $\gamma(1) = (\mu, c_1)$ such that $c_1 \geq c$ and for all $t\in [0,1]$
$$
H_{\mu(t)}\left( L_1^{\mu(t)} \right) < c(t) < H_{\mu(t)}\left( L_2^{\mu(t)} \right), H_{\mu(t)} \left( L_3^{\mu(t)} \right),
$$
\ie a path between the first and the second critical value from the base case to the mass ratio and (possibly higher) energy of the supposed problematic case.
Note that $\mathfrak{K}_{c, \mu}^b \subset \mathfrak{K}_{c^\prime,\mu}^b$ for $c \leq c^\prime$, so we get $\mathfrak{K}_{\mu,c_1}^b \cap W_\mu^{-1}(0) \neq \emptyset$.
Since $\mathfrak{K}_{\mu(0),c(0)}^b \cap W_{\mu(0)}^{-1}(0)=\emptyset$ by Lemma \ref{Lemma base case2} but $\mathfrak{K}_{\mu(1),c(1)}^b \cap W_{\mu(1)}^{-1}(0) \neq \emptyset$ by assumption and everything deforms smoothly along the path, there must be some $t_0 \in (0,1]$ such that $\partial \mathfrak{K}_{\mu(t_0),c(t_0)}^b \cap W_{\mu(t_0)}^{-1}(0) \neq \emptyset$, \ie the oval of zero velocity intersects the zero level set of~$W$.
Using the symmetry of both $\mathfrak{K}_{\mu,c}^b$ and $W_\mu^{-1}(0)$ we can assume this intersection appears outside of $\lbrace q_2=0 \rbrace$, say $q^0=(q_1^0, q_2^0) \in \partial \mathfrak{K}_{\mu(t_0),c(t_0)}^b \cap W_{\mu(t_0)}^{-1}(0)$ and $q^0_2 \neq 0$.
This implies $V_{q_2, \mu(t_0)}(q^0) = q_2^0 W_{\mu(t_0)}(q^0)=0$, \ie $\mathfrak{O}_{\mu(t_0),c(0)}^b$ has an additional vertical tangent away from the $q_1$-axis at $q^0$.
\\
To continue the first base case below the first critical value one would have to find a smooth path to either $\mu = 0$ (as originally in \cite{birkhoff}; left dottet path in figure \ref{figure path}) or to $\mu=1$.
In order to simply be able to fix the energy (second dotted path) one would additionally have to prove the convexity of the first critical energy level.
From there on one gets an additional vertical tangent to the oval of zero velocity using a similar argument as above.
\\
In order to show that there can not be an additional vertical tangent of the oval of zero velocity to the obvious ones at the intersection with the $q_1$-axis, we write down the smooth dependence of the ovals along the path~$\gamma$ as a smooth 1-parameter family of diffeomorphisms
$$
\phi_t \colon S^1 \to \mathfrak{O}_{c(t),\mu(t)}^b.
$$
So, we can define a smooth 1-parameter family of real valued functions
\begin{align*}
f \colon [0,1] \times S^1 &\to \RR
\\
(t,x) &\mapsto f_t(x):= \pi_1\left( \phi_t(x) \right),
\end{align*}
where $\pi_1 \colon (q_1,q_2) \mapsto q_1$ is the projection along the second coordinate.
\\
We will show next that~$f_t$ is Morse for all~$t \in [0,1]$ in order to apply lemma \ref{lemma number of critical points constant}.
For that, we have to check that every critical point of~$f_t$ is nondegenerate.
Critical points here are points where the oval of zero velocity has vertical tangent, \ie where the partial derivative $V_{q_2}$ vanishes.
A critical point is degenerate if it is also a point of inflexion, \ie if the second derivative $\partial^2 V/ \partial q_2^2 =:V_{q_2^2}$ vanishes as well.
The proof that there are no vertical points will be split into several cases, depending on the location of the point (as also done in \cite{birkhoff})
\\
\underline{Case 1} ($q_2=0$):
\\
If~$q_2=0$ then $V_{q_2^2}$ reduces to
\begin{align}
\frac{\mu}{\betrag{q_1 - (1 - \mu)}^3} + \frac{1-\mu}{\betrag{q_1 + \mu}^3} - 1 \overset{!}{=} 0.\label{ycoeff0}
\end{align} 
\\
\underline{Case 1.1} ($-\mu < q_1 < 1-\mu$):
\\
So, if~$q_1$ lies between~$e$ and~$s$, the denominators in the two fractions of \eqref{ycoeff0} are both strictly smaller than~1, making the left hand side strictly negative and especially non-zero.
\pagebreak
\\
\underline{Case 1.2} ($q_1 > 1 - \mu$):
\\
On the other hand, if~$q_1 > 1 - \mu$ we compute the partial derivative of~$V$ by~$q_1$ to be
\begin{align*}
&V_{q_1}(q_1,0)
= \frac{\mu \left( q_1 - (1-\mu) \right)}{\betrag{q_1 - (1 - \mu)}^3}
+ \frac{(1 - \mu)\left( q_1 + \mu \right)}{\betrag{q_1 + \mu}^3} 
- q_1
\\
&=
(q_1 - (1 - \mu)) \left(\frac{\mu}{\betrag{q_1 - (1 - \mu)}^3} + \frac{1-\mu}{\betrag{q_1+\mu}^3} - 1 \right) + (1-\mu) \left(\frac{1}{\betrag{q_1 + \mu}^3} - 1 \right)
\end{align*}
on the $q_1$-axis.
For vertical points of inflexion, equation \eqref{ycoeff0} gives us
$$
u^\prime (q_1)
=
V_{q_1}(q_1,0)
\overset{!}{=}
(1 - \mu)\left( \frac{1}{\betrag{q_1 + \mu}^3} - 1 \right) < 0.
$$
However, this is exactly the first derivative of the function~$u = V|_{\RR \setminus \{ e, s \}}$ from \eqref{u definition}, which was defined earlier and shown to be strictly monotonically increasing for $q_1 > 1 - \mu$ until it reaches the critical point~$\ell_2$ and only then starts to decline again (see figure \ref{figure restricted u}).
By symmetry of the Hill's regions and the discussion in chapter \ref{subsection critical points and hills regions} about the shape of the Hill's regions there is no part of the bounded Hill's region beyond this critical point since $c < H(L_2)$.
So~$u^\prime (q_1)$ must be strictly positive and so contradicting the assumption that there is a vertical point of inflexion in this case.
\\
\underline{Case 1.3} ($q_1<-\mu$):
\\
This case works analogously to the latter case, as~$V_{q_1}$ becomes
\begin{align*}
V_{q_1}(q_1,0)
&= \frac{\mu \left( q_1 - (1-\mu) \right)}{\betrag{q_1 - (1 - \mu)}^3}
+ \frac{(1 - \mu)\left( q_1 + \mu \right)}{\betrag{q_1 + \mu}^3} 
- q_1
\\
&=
(q_1 + \mu) \left(\frac{\mu}{\betrag{q_1 - (1 - \mu)}^3} + \frac{1-\mu}{\betrag{q_1+\mu}^3} - 1 \right) - \mu \left(\frac{1}{\betrag{q_1 - (1 - \mu)}^3} - 1 \right)
\end{align*}
and for vertical points of inflexion we would have
$$
u^\prime (q_1)
=
V_{q_1}(q_1,0)
\overset{!}{=}
- \mu \left( \frac{1}{\betrag{q_1 - (1 - \mu)}^3} - 1 \right) > 0.
$$
As in case 1.2 this implies that the restricted potential increases, which is only the case up until $\ell_3$.
By $c<H(L_3)$ these points are not reached by any of the bounded Hill's regions~$\mathfrak{K}_c^b$.
\\
From these first cases we can conclude, that there can be no vertical point of inflexion of the oval of zero velocity along the $q_1$-axis inside of~$\mathfrak{K}_c^b$.

Next, we show that a vertical point of inflexion can also not lie at any other point inside the bounded Hill's region.
\\
\underline{Case 2} ($q_2 \neq 0$):
\\
Here, a vertical point of inflexion implies, that by the quotient rule
$$
\frac{\partial}{\partial q_2} \left( \frac{V_{q_2}}{q_2} \right)
= \frac{q_2 V_{q_2^2} - V_{q_2}}{q_2^2}
\overset{!}{=}
0.
$$
But~$V_{q_2}/q_2$ is just the coefficient of~$q_2$ in \eqref{Omegay} and after differentiating there remains
$$
\frac{\partial}{\partial q_2} \left( \frac{V_{q_2}}{q_2} \right)
= \frac{\partial}{\partial q_2} \left(\frac{\mu}{\betrag{q-e}^3} + \frac{1-\mu}{\betrag{q-s}^3} - 1 \right)
= - 3 q_2 \left( \frac{\mu}{\betrag{q-e}^5} + \frac{(1-\mu)}{\betrag{q-s}^5} \right)
\neq 0.
$$
This final contradiction concludes the statement that there can not be a vertical point of inflexion inside the bounded Hill's region~$\mathfrak{K}_c^b$ and thus all critical points of~$f_t$ are non-degenerate, \ie Morse for all~$t$.
Therefore, we can apply lemma \ref{lemma number of critical points constant} and we obtain, that the number of critical values of~$f_t$ is constant for all~$t \in [0,1]$.
Explicitly the number of critical points is exactly two, since $\mathfrak{O}_{-2+\epsilon,1/2}^b$ is a symmetric and connected 1-submanifold of~$\RR$ and critical points only lie on the $q_1$-axis.
In particular there can not be a vertical tangent of~$\mathfrak{O}_{\mu(t_0),c(t_0)}^b$ at $q^0 \notin \lbrace q_2=0 \rbrace$.
\end{proof}

Using lemma \ref{Omegayzeros}, one can now conclude that every periodic orbit inside the bounded Hill's region has at least two syzygies:
Let~$x$ be a non-trivial orbit in~$\Sigma_c^e$ and~$T_x > 0$ its minimal period time---the only trivial periodic orbit is stationary at~$\ell_1$, which lies on the $q_1$-axis, so the statement also holds here.
Then by integrating the second equation~\eqref{r3bp equations 2} of the equations of motion, we get
\begin{align*}
\int_0^{T_x} \ddot{x}_2 (t) \diff t
&=
- 2 \int_0^{T_x} \dot{x}_1 (t) \diff t - \int_0^{T_x} V_{q_2}(x(t)) \diff t
\\
\text{\ie}
0
&=
\int_0^{T_x} V_{q_2}(x(t)) \diff t.
\end{align*}
Since the integral over a smooth function can only be zero if the function itself passes zero we need $V_{q_2}(x(t_0))=0$ for some~$0 < t_0 < T_x$.
By lemma~\ref{Omegayzeros} this is only the case if~$x_2(t_0) = 0$, \ie if~$x$ intersects the $q_1$-axis.
We can integrate the same equation again from~$t_0$ to~$t_0 + T_x$ to get a second intersection during one period.
Note, that these intersections must be transverse, as the sign of~$V_{q_2}$ must change at these syzygies.

\section{Another (easier) application of the same method: Hill's lunar problem} \label{section Hills case}
\begin{figure}
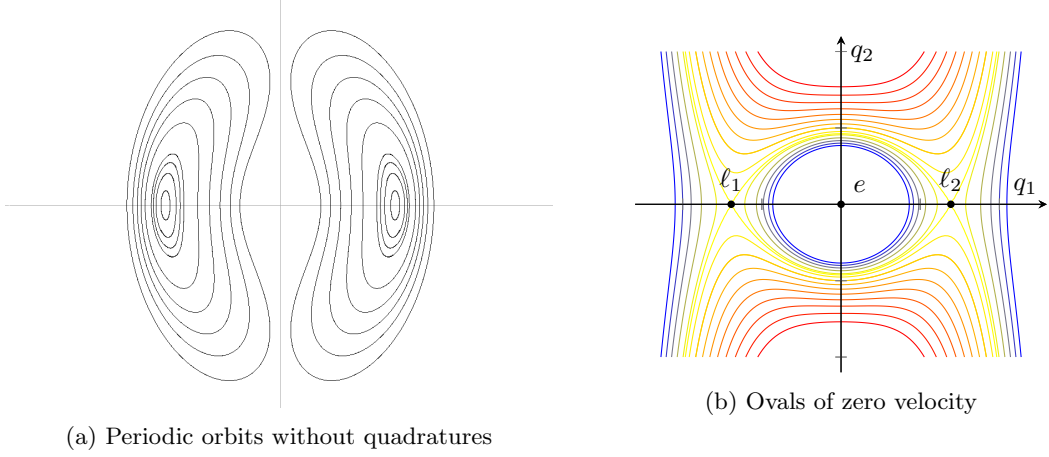

\begin{subfigure}{.49\textwidth}
  \centering
\includegraphics[width=\textwidth]{family_a_and_c}
  \caption{Periodic orbits without quadratures}
	\label{figure a and c}
\end{subfigure}
\begin{subfigure}{.5\textwidth}
  \centering
\includegraphics[scale=1]{contour_hill.pdf}
  \caption{Ovals of zero velocity}
	\label{figure hills problem ovals}
\end{subfigure}
\caption{Hill's lunar problem}
\end{figure}
Hill's lunar problem is a limit case of the PCR3BP by letting the sun get infinitely heavy and at same time infinitely far away (see \cite{hill1}).
The Hamiltonian is given by
\begin{align*}
H(q,p)
&= \frac{1}{2} \betrag{p}^2 + q_2p_1 - q_1 p_2 - \frac{1}{\betrag{q}} - q_1^2 + \frac{1}{2} q_2^2 
\\
&= \frac{1}{2}\left(
(p_1 + q_2)^2 + (p_2 - q_1)^2
\right)
- \frac{1}{\betrag{q}}
- \frac{3}{2}q_1^2
\end{align*}
and the corresponding second order differential equations are
\begin{align}
\ddot{q}_1
&=\hphantom{-} 2 \dot{q}_2 - \frac{q_1}{\betrag{q}^3} + 3q_1
\label{hills eom1}
\\
\ddot{q}_2
&= - 2 \dot{q}_1 - \frac{q_2}{\betrag{q}^3}.
\label{hills eom2}
\end{align}
We can now do the same integration over \eqref{hills eom2} as in the final step in chapter \ref{section continuation} to get
\begin{align*}
\int_0^T \ddot{x}_2(t) \diff t &= - 2 \int_0^T \dot{x}_1(t) \diff t - \int_0^T \frac{x_2(t)}{\betrag{x(t)}^3} \diff t
\\
0 &= \int_0^T \frac{x_2(t)}{\betrag{x(t)}^3} \diff t
\end{align*}
which only happens if the non-trivial periodic orbit~$x$ passes the $q_1$-axis at some point~$t_0$.
By integrating from~$t_0$ to $t_0+T$ there is a second syzygy in each periodic orbit of Hill's lunar problem and both syzygies are again transverse.
There are two critical points at $(\pm 3^{-1/3},0)$ so the statement also holds for these stationary orbits.

By integrating \eqref{hills eom1} we can also prove the existence of quadratures.
However since the two critical points generate families of periodic orbits away from the $q_2$-axis not all closed orbits have quadratures (figure \ref{figure a and c}).
These are families $a$ and $c$ as classified in \cite{henon}.
One can show---as done in \cite{master} that the bounded Hill's region, \ie the bounded component of possible positions for energies below the common critical value $-3^{4/3}/2$, is contained within the ball of radius~$3^{-1/3}$ around the origin.
So all periodic orbits of Hill's lunar problem with energy below the first critical value have quadratures.
Again one can see the correspondence that additional---in this case horizontal---tangents of the ovals of zero velocity prevent an extension of the statement to higher energies (figure \ref{figure hills problem ovals}).

\section{Remarks} \label{section remarks}
\begin{figure}
  \centering
\includegraphics[scale=1]{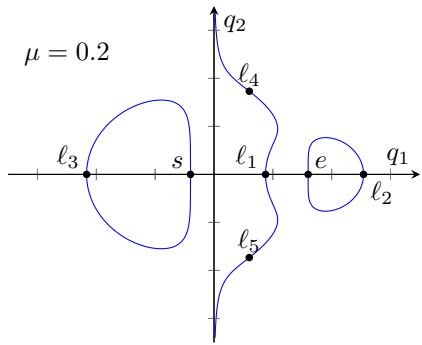}
  \caption{Set of zeros of $V_{q_1}$}
	\label{figure zeros V_q1}
\end{figure}

Since in Hill's lunar problem the horizontal tangents originate in the critical points which lie away from the $q_2$-axis there are direct counterexamples to show the statement can not be extended to a higher energy.
In the PCR3BP however the additional vertical tangents, preventing the proof to go through for higher energies, originate in critical points lying on the $q_1$-axis themselves.
So there are no direct and obvious obstructions for all periodic orbits having syzygies for energies $c<H(L_4)=H(L_5)$.

Similarly for the unbounded Hill's region, the proof as shown in this paper does not work as there can be vertical points of inflexion and so the constructed 1-parameter family of maps on $S^1$ are not Morse.
Indeed, vertical points of inflexion generate additional vertical tangents below the second critical value, which disables our argument of integration.
However, for small enough energies obviously roots of~$V_{q_2}$ only lie along the $q_1$-axis also in the unbounded part and we get syzygies in the same way as before.

However for energies $c>H(L_5)$ there is no hope to prove the existence of syzygies for all periodic orbits since the Lyapunov orbits around the triangular Lagrange points~$L_4$ and~$L_5$ provide direct counterexamples.

Opposed to Hill's lunar problem, in the PCR3BP quadratures with respect to one of the primaries can not be predicted in this manner, since in general neither $\lbrace q_1 = - \mu \rbrace$ nor $\lbrace q_1 = 1-\mu \rbrace$ makes the partial derivative $V_{q_1}$ vanish (although the set of roots comes arbitrarily close for small enough radii; see figure \ref{figure zeros V_q1} and \cite{birkhoff}).

Further interesting questions which can not be answered by this method is whether periodic orbits in the bounded Hill's region below the first critical value have a non-trivial winding number with respect to the respective primary or even if they have syzygies on both sides (\ie if there always exist both solar and lunar eclipse).
Equally unknown is what we can say about sequences of syzygy types for periodic or even arbitrary orbits in the bounded Hill's region below the second critical value.

\addcontentsline{toc}{section}{References}		

\bibliography{Literatur}						

\end{document}